\theoremstyle{plain}
 \newtheorem{thm}{Theorem}[section]
 \newtheorem{lem}[thm]{Lemma}
\theoremstyle{definition}
\theoremstyle{remark}
 \newtheorem{rem}{Remark}[section]
\begin{document}
\title[Non-commutative A-G mean inequality]
{Non-commutative A-G mean inequality}
\author[
Tomohiro Hayashi]{{Tomohiro Hayashi} }
\address[Tomohiro Hayashi]
{Nagoya Institute of Technology, 
Gokiso-cho, Showa-ku, Nagoya, Aichi, 466-8555, Japan}
\email[Tomohiro Hayashi]{hayashi.tomohiro@nitech.ac.jp}

\baselineskip=17pt
\keywords{operator inequality, operator mean, geometric mean}
\subjclass{47A63, 47A64}
\maketitle

\begin{abstract}
In this paper we consider non-commutative 
analogue for the arithmetic-geometric mean inequality 
$$a^{r}b^{1-r}+(r-1)b\geq ra$$ 
for two positive numbers $a,b$ and $r> 1$. 
We show that under some assumptions the non-commutative 
analogue for $a^{r}b^{1-r}$ which satisfies this inequality 
is unique and equal to $r$-mean. 
The case $0<r<1$ is also considered. In particular, 
we give a new characterization of the geometric mean.

\end{abstract}

\section{Introduction}
For any two positive numbers $a,b$ 
and $r> 1$, 
we have the arithmetic-geometric mean inequality 
$$a^{r}b^{1-r}+(r-1)b\geq ra.$$ 
In this paper we consider the non-commutative 
analogue of this inequality for bounded 
linear operators on a Hilbert space. 
In particular, 
we give a new characterization of the geometric mean. 
Recently their ingenious paper~\cite{CL}, 
Carlen and Lieb used a certain non-commutative 
analogue of this inequality. 
Their paper is a motivation of our considerations.

There is one obvious 
non-commutative 
analogue as follows. For a bounded positive 
operator $X$ on a Hilbert space, we always have 
$$X^{r}+(r-1)\geq rX.$$
For any two positive invertible operators 
$A,B$, set 
$X=B^{-1/2}AB^{-1/2}$. 
Then we have 
$$
(B^{-1/2}AB^{-1/2})^{r}+(r-1)\geq rB^{-1/2}AB^{-1/2}
$$
and hence 
$$
B^{1/2}(B^{-1/2}AB^{-1/2})^{r}B^{1/2}+(r-1)B
\geq rA.
$$
Thus if we consider 
$B^{1/2}(B^{-1/2}AB^{-1/2})^{r}B^{1/2}$ 
(so-called $r$-mean) 
as non commutative analogue 
for $a^{r}b^{1-r}$, we get a desired inequality. 

We conjecture that there is 
no other example of non-commutative analogue 
for the above arithmetic-geometric mean inequality. 
The main result of this paper is as follows. 
We consider ``non-commutative analogue'' 
$M(A,B)$ for $a^{r}b^{1-r}$. More precisely 
$M$ is a two variable map and $M(A,B)$ is a 
positive invertible operator for 
any two positive invertible operators 
$A,B$. We assume that 
\begin{enumerate}
\item
$M(tA,B)=t^{r}M(A,B)$ for any positive number $t$,
\item
$M(A,B)^{-1}=M(A^{-1},B^{-1})$.
\end{enumerate}
For example, 
$$
A^{r/2}B^{1-r}A^{r/2},\ \ \ \  
B^{(1+2r)/2}(B^{6}A^{-2}B^{6})^{-r/2}B^{(1+2r)/2}
$$
satisfy these conditions. 
Under these assumptions, 
if the inequality 
$$M(A,B)\geq rA+(1-r)B$$
holds, 
then we will show that 
$$M(A,B)=B^{1/2}(B^{-1/2}AB^{-1/2})^{r}B^{1/2}.$$
Therefore in a certain sense 
our conjecture is true. 

Of course these two assumptions are too strong. 
For example, 
$$
(A^{3}+2B)^{2}A^{r/2}(A^{3}+2B)^{-2}
B^{1-r}(A^{3}+2B)^{-2}A^{r/2}
(A^{3}+2B)^{2}$$
can be considered as non-commutative analogue for 
$a^{r}b^{1-r}$. However this does not satisfy 
our assumptions. 

We shall also consider the case $0<r<1$ and 
show a similar result. 
That is, under the assumptions (i) and (ii), 
if the inequality 
$$M(A,B)\leq rA+(1-r)B$$
holds, 
then we will show that 
$$M(A,B)=B^{1/2}(B^{-1/2}AB^{-1/2})^{r}B^{1/2}.$$

Our result can be considered as a characterization of 
$r$-mean, 
in particular the geometric mean. 
In the paper~\cite{AN} T.~Ando and K.~Nishio 
gave a characterization of the harmonic mean. 

The author wishes to express his hearty gratitude to Professor 
Tsuyoshi Ando for valuable comments. 
The author is also grateful to Professor 
Yoshihiro Nakamura for discussion. 
The author would like to thank Professors 
Hideki Kosaki, 
Mitsuru Uchiyama 
and Atsushi Uchiyama for useful advice 
and comments.

\section{Main Result}
Throughout this paper we assume that 
the readers are familiar with 
basic notations and results on operator 
theory. We refer the readers to 
Conway's book~\cite{C}. 

We denote 
by ${\frak H}$ a 
(finite or infinite dimensional) 
complex Hilbert space 
and by $B({\frak H})$ 
all bounded linear operators on it. 
For each operator $A\in B({\frak H})$, 
its operator norm is denoted by 
$||A||$. 
We denote by $B({\frak H})^{+}$ the set 
of all positive invertible operators. 
For two vectors $\xi,\eta\in {\frak H}$, 
their inner product and norm are denoted by 
$\langle \xi,\eta\rangle$ and $||\xi||$ 
respectively. 

In this paper we consider the map $M(\cdot,\cdot)$ 
from $B({\frak H})^{+}\times B({\frak H})^{+}$ 
to $B({\frak H})^{+}$. 

We fix a positive number 
$r> 0$. For $A,B\in B({\frak H})^{+}$, 
define 
$$
M_{r}(A,B)=B^{1/2}(B^{-1/2}AB^{-1/2})^{r}B^{1/2}.
$$
Here we remark that 
$$
B^{1/2}(B^{-1/2}AB^{-1/2})^{r}B^{1/2}=
A^{1/2}(A^{-1/2}BA^{-1/2})^{1-r}A^{1/2}.\eqno{(\dagger)}$$
(This is well-known for specialists.) 
Indeed, if $r$ is an integer, direct computations 
show this equality. Thus for any polynomial $p(t)$ 
with $p(0)=0$, we have 
$$
B^{1/2}\cdot p(B^{-1/2}AB^{-1/2})\cdot B^{1/2}=
A^{1/2}\cdot p((A^{-1/2}BA^{-1/2})^{-1})
\cdot(A^{-1/2}BA^{-1/2})\cdot A^{1/2}.$$ 
Thus by continuity we get $(\dagger)$. 
The map $M_{r}$ is so-called $r$-mean, and usually 
the case $0<r<1$ is considered. 
(When $0<r<1$, $M_{r}(A,B)$ is one of the 
so-called operator means. In particular, 
in the case $r=1/2$, $M_{r}(A,B)$ 
is said to be the geometric mean.)

First we shall consider the case $r>1$. The following 
is our main result. 

\begin{thm}
Assume $r>1$. 
For any $A,B\in B({\frak H})^{+}$, if the map $M$ satisfies 
\begin{enumerate}
\item
$M(A,B)\geq rA+(1-r)B,$
\item
$M(tA,B)=t^{r}M(A,B)$ for any positive number $t$, 
\item
$M(A,B)^{-1}=M(A^{-1},B^{-1}),$
\end{enumerate}
then we have $M=M_{r}$.
\end{thm}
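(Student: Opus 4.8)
The plan is to convert the single hypothesis (i) into two one-parameter families of operator inequalities using the homogeneity (ii) and the inversion rule (iii), reduce to the case $B=I$ by a congruence, and then read off $M(A,B)$ on an eigenbasis of the reduced operator.

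First I would combine (i) and (ii): replacing $A$ by $tA$ in (i) and using $M(tA,B)=t^{r}M(A,B)$ gives, for every $t>0$,
$$ M(A,B)\ \geq\ rt^{1-r}A+(1-r)t^{-r}B. $$
Applying (i) to the pair $(tA^{-1},B^{-1})$, rewriting $M(tA^{-1},B^{-1})=t^{r}M(A^{-1},B^{-1})=t^{r}M(A,B)^{-1}$ by (ii) and (iii), yields the companion family
$$ M(A,B)^{-1}\ \geq\ rt^{1-r}A^{-1}+(1-r)t^{-r}B^{-1}\qquad(t>0). $$
Conjugating by $B^{-1/2}$ and writing $\hat A=B^{-1/2}AB^{-1/2}$, $\hat X=B^{-1/2}M(A,B)B^{-1/2}$, these become $\hat X\geq L(t):=rt^{1-r}\hat A+(1-r)t^{-r}I$ and $\hat X^{-1}\geq\tilde L(t):=rt^{1-r}\hat A^{-1}+(1-r)t^{-r}I$, while the goal $M(A,B)=M_{r}(A,B)$ turns into $\hat X=\hat A^{\,r}$.

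Now fix a spectral direction of $\hat A$: let $e$ be a unit eigenvector, $\hat Ae=\lambda e$. The scalar function $t\mapsto rt^{1-r}\lambda+(1-r)t^{-r}$ is exactly the left-hand side of the scalar A--G inequality, so it attains its maximum $\lambda^{r}$ at $t^{*}=1/\lambda$. From $\hat X\geq L(t)$ I get $\langle e,\hat Xe\rangle\geq\sup_{t}\langle e,L(t)e\rangle=\lambda^{r}$; inverting $\hat X^{-1}\geq\tilde L(t)$ (on the range where $\tilde L(t)>0$) gives $\hat X\leq\tilde L(t)^{-1}$, and since $e$ is an eigenvector of $\tilde L(t)$ as well, $\langle e,\hat Xe\rangle\leq\inf_{t}\langle e,\tilde L(t)^{-1}e\rangle=\big(\sup_{t}(rt^{1-r}\lambda^{-1}+(1-r)t^{-r})\big)^{-1}=\lambda^{r}$. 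Hence $\langle e,\hat Xe\rangle=\lambda^{r}$ exactly.

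The final and, to my mind, essential step upgrades this scalar equality to $\hat Xe=\lambda^{r}e$. At the tangency parameter $t^{*}=1/\lambda$ the A--G gap closes, $\langle e,L(t^{*})e\rangle=\lambda^{r}=\langle e,\hat Xe\rangle$, so the positive operator $\hat X-L(t^{*})\geq0$ has a vanishing diagonal entry in the direction $e$; positivity then forces $(\hat X-L(t^{*}))e=0$, whence $\hat Xe=L(t^{*})e=\lambda^{r}e$. Running this over an eigenbasis of $\hat A$ gives $\hat X=\hat A^{\,r}$, i.e. $M=M_{r}$. I expect the main obstacle to be precisely this coupling: neither family alone determines $\hat X$ (each is equivalent only to a weak quadratic-form bound satisfied by many operators besides $\hat A^{\,r}$), and it is the simultaneous use of both families at the single parameter $t^{*}$, together with the rigidity of a positive operator at a null diagonal entry, that pins $\hat X$ down. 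For $\hat A$ with continuous spectrum I would replace eigenvectors by spectral projections and pass to a limit, which should be routine.
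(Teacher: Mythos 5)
Your reduction to the two families $\hat X\geq L(t)$ and $\hat X^{-1}\geq\tilde L(t)$ is fine, as is the lower bound $\langle\hat Xe,e\rangle\geq\lambda^{r}$, but the upper bound breaks down, and the breakdown is exactly where the difficulty of the theorem lies. Since $r>1$, the coefficient $1-r$ is negative, so $\tilde L(t)=rt^{1-r}\hat A^{-1}+(1-r)t^{-r}I$ is \emph{not} positive for all $t$: one checks $\tilde L(t)>0$ iff $t>\frac{r-1}{r}\|\hat A\|$. Operator inversion reverses order only between positive operators, so $\hat X\leq\tilde L(t)^{-1}$ is available only on that restricted range, and the tangency parameter $t^{*}=\lambda$ lies in it only when $\lambda>\frac{r-1}{r}\|\hat A\|$. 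Whenever the spectral spread of $\hat A$ exceeds the factor $r/(r-1)$, the optimizer for the lower part of the spectrum is excluded and your claimed identity $\inf_{t}\langle\tilde L(t)^{-1}e,e\rangle=\lambda^{r}$ fails. Concretely, take $r=2$, $\hat A=\mathrm{diag}(1,10)$, and $e$ the eigenvector with $\lambda=1$. Then $\tilde L(t)=2t^{-1}\hat A^{-1}-t^{-2}I$ is positive only for $t>5$, and on that range $\langle\tilde L(t)^{-1}e,e\rangle=t^{2}/(2t-1)$ is increasing, so the infimum over admissible $t$ is $25/9>1=\lambda^{r}$. Thus your argument yields only $\langle\hat Xe,e\rangle\leq 25/9$, not $\leq\lambda^{r}$, and the tangency/rigidity step (which is correct in itself) never gets off the ground.

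What survives is the pair of one-sided bounds $\langle\hat Xe,e\rangle\geq\lambda^{r}$ and $\langle\hat X^{-1}e,e\rangle\geq\lambda^{-r}$ (the latter by taking quadratic forms in $\hat X^{-1}\geq\tilde L(t)$ directly, which needs no positivity of $\tilde L(t)$). These do not force $\langle\hat Xe,e\rangle=\lambda^{r}$: both can be simultaneously strict when $e$ is not an eigenvector of $\hat X$; for instance, if $\hat X$ has eigenvalues $4$ and $1/4$ and $e$ bisects its eigenvectors, then $\langle\hat Xe,e\rangle=\langle\hat X^{-1}e,e\rangle=17/8$. Converting the hypothesis on $M(A,B)^{-1}$ into a sharp \emph{upper} bound on $M(A,B)$ is precisely the core of the problem, and it is what the paper's machinery is for: Lemma 2.2 uses Jensen's inequality to produce the coupled condition $\langle X\xi,\xi\rangle\langle Y^{-1}\xi,\xi\rangle\geq1$ (with $Y$ built from $M_{r}$) at \emph{every} unit vector, not only at eigenvectors of one fixed operator, and Theorem 2.3 is a genuine rigidity theorem: the two coupled conditions $X``\geq''Y$ and $Y``\geq''X$ together force $X=Y$. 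Its proof --- rank-one spectral projections plus the equality case of Cauchy--Schwarz in finite dimensions, and Ando's approximation with spectral projections and Schur complements in general --- is the bulk of the paper; for the same reason, your closing claim that the continuous-spectrum case ``should be routine'' is optimistic. To repair your argument you would need a substitute for the invalid inversion step, and that substitute is essentially Theorem 2.3.
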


We need some preparations to prove this theorem. 
The following lemma states that 
under the assumptions (i) and (ii), 
the map $M_{r}(A,B)$ is 
``less'' than $M(A,B)$ in a certain sense. 
(See Remark 2.1.)

\begin{lem}
For any $A,B\in B({\frak H})^{+}$, we 
assume that the map $M$ satisfies 
\begin{enumerate}
\item
$M(A,B)\geq rA+(1-r)B,$
\item
$M(tA,B)=t^{r}M(A,B)$ for any positive number $t$.
\end{enumerate}
Then for any unit vector $\xi\in{\frak H}$, if 
$r\geq 2$ 
we have 
$$
\langle 
A^{-1/2}M(A,B)A^{-1/2}\xi,\xi
\rangle
\langle 
(A^{-1/2}M_{r}(A,B)A^{-1/2})^{-1}\xi,\xi
\rangle
\geq 1.
$$
On the other hand if $1< r\leq 2$ we have 
$$\langle (A^{-1/2}M(A,B)A^{-1/2})^{1/(r-1)}
\xi,\xi\rangle
\langle (A^{-1/2}M_{r}(A,B)A^{-1/2})^{-1/(r-1)}
\xi,\xi\rangle\geq 1.$$

\end{lem}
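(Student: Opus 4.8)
The plan is to reduce both inequalities to a scalar statement about the eigenvalues (or spectral values) of a single operator, after conjugating everything by $A^{-1/2}$ to normalize. Let me set $T=A^{-1/2}M(A,B)A^{-1/2}$ and $S=A^{-1/2}M_r(A,B)A^{-1/2}$, both positive invertible operators. The goal becomes, for a unit vector $\xi$, to prove $\langle T\xi,\xi\rangle\langle S^{-1}\xi,\xi\rangle\ge 1$ when $r\ge 2$, and $\langle T^{1/(r-1)}\xi,\xi\rangle\langle S^{-1/(r-1)}\xi,\xi\rangle\ge 1$ when $1<r\le 2$. The homogeneity assumption (ii) is what makes this tractable: since $M(tA,B)=t^rM(A,B)$ while $rA+(1-r)B$ scales only linearly in the $A$-term, I can test the inequality (i) at a rescaled pair and optimize over the scaling parameter $t>0$.

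Let me write the idea precisely. For fixed $\xi$, I would apply inequality (i) to the pair $(tA,B)$ for an arbitrary $t>0$, getting $M(tA,B)\ge rtA+(1-r)B$; dividing out and conjugating by $(tA)^{-1/2}=t^{-1/2}A^{-1/2}$ on both sides transforms this into a relation between $t^{r-1}T$ (coming from $t^{-1}\cdot t^r$) and the normalized right-hand side. Pairing against $\xi$ yields a scalar inequality of the form $t^{r-1}\langle T\xi,\xi\rangle\ge r - (r-1)\,t^{-1}\langle A^{-1/2}BA^{-1/2}\xi,\xi\rangle$, valid for all $t>0$. The second key input is the identity $(\dagger)$, which lets me express $M_r(A,B)$ — equivalently $S$ — through $A^{-1/2}BA^{-1/2}$; specifically $S=(A^{-1/2}BA^{-1/2})^{1-r}$, so that $\langle S^{-1/(r-1)}\xi,\xi\rangle=\langle A^{-1/2}BA^{-1/2}\xi,\xi\rangle$ and likewise $\langle S^{-1}\xi,\xi\rangle=\langle(A^{-1/2}BA^{-1/2})^{r-1}\xi,\xi\rangle$.

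The heart of the argument is then to optimize over $t$. Writing $\alpha=\langle T\xi,\xi\rangle$ and $\beta=\langle A^{-1/2}BA^{-1/2}\xi,\xi\rangle$, the scalar inequality $t^{r-1}\alpha+(r-1)t^{-1}\beta\ge r$ must hold for all $t>0$. Minimizing the left-hand side over $t$ (an elementary calculus exercise whose minimizer balances the two power-of-$t$ terms) produces a sharp constraint relating $\alpha$ and $\beta$, and this constraint is exactly what rearranges into the claimed product inequality. The two cases $r\ge 2$ versus $1<r\le 2$ arise because the spectral-value comparison of $\langle S^{-1}\xi,\xi\rangle$ against $\beta^{r-1}$, and of $\langle S^{-1/(r-1)}\xi,\xi\rangle$ against $\beta$, goes in opposite directions depending on whether the operator-convexity/concavity of the map $x\mapsto x^{r-1}$ (respectively $x\mapsto x^{1/(r-1)}$) applies: for $r\ge 2$ the exponent $r-1\ge 1$ makes $x\mapsto x^{r-1}$ operator convex, giving $\langle(A^{-1/2}BA^{-1/2})^{r-1}\xi,\xi\rangle\ge\beta^{r-1}$, whereas for $1<r\le 2$ the exponent $1/(r-1)\ge 1$ is the one in the convex regime.

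I expect the main obstacle to be precisely this last spectral comparison — converting the optimized scalar bound, which involves powers of the \emph{scalar} $\beta=\langle A^{-1/2}BA^{-1/2}\xi,\xi\rangle$, into a bound involving the \emph{operator} expectation $\langle(A^{-1/2}BA^{-1/2})^{p}\xi,\xi\rangle$ for the appropriate power $p$. This is where the Jensen-type operator inequality for convex functions (equivalently, the bound $\langle f(X)\xi,\xi\rangle\ge f(\langle X\xi,\xi\rangle)$ for convex $f$ and unit $\xi$) enters, and it is what forces the case split on $r$. The homogeneity optimization and the identity $(\dagger)$ are the routine pieces; the care needed to line up the correct convexity direction with the correct exponent in each regime is the delicate step.
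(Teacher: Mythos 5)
Your proposal is correct and follows essentially the same route as the paper's own proof: apply (i) to the pair $(tA,B)$, use the homogeneity (ii), conjugate by $A^{-1/2}$, optimize the resulting scalar inequality over $t>0$ to obtain $\langle T\xi,\xi\rangle\,\langle A^{-1/2}BA^{-1/2}\xi,\xi\rangle^{r-1}\geq 1$, and then combine $(\dagger)$ with the Jensen inequality $\langle f(X)\xi,\xi\rangle\geq f(\langle X\xi,\xi\rangle)$ for convex $f$, with the identical case split on $r$. The only caveat is terminological: what is needed (and what you in fact invoke in your final clarification) is ordinary scalar convexity of $x\mapsto x^{r-1}$, respectively $x\mapsto x^{1/(r-1)}$, together with Jensen applied to the spectral measure of the operator --- not operator convexity, which actually fails for $x\mapsto x^{r-1}$ once $r>3$.
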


\begin{proof}
By assumptions we have 
$$t^{r}M(A,B)\geq rtA+(1-r)B$$
and hence 
$$A^{-1/2}M(A,B)A^{-1/2}\geq rt^{1-r}+
(1-r)t^{-r}A^{-1/2}BA^{-1/2}.$$
For a unit vector $\xi\in {\frak H}$, set 
$$f(t)=
rt^{1-r}
+(1-r)t^{-r}\langle A^{-1/2}BA^{-1/2}
\xi,\xi\rangle.$$
Here we remark that 
$\langle 
A^{-1/2}M(A,B)A^{-1/2}\xi,\xi
\rangle
\geq f(t)$. 
Then it is easy to see that 
the maximum value of $f(t)$ 
on $(0,\infty)$ is 
equal to 
$\langle A^{-1/2}BA^{-1/2}
\xi,\xi\rangle^{1-r}$. 
Thus we get 
$$
\langle A^{-1/2}M(A,B)A^{-1/2}
\xi,\xi\rangle
\langle A^{-1/2}BA^{-1/2}
\xi,\xi\rangle^{r-1}
\geq 1.$$

In the case $r\geq 2$, 
by the Jensen inequality and $(\dagger)$ we have 
$$\langle A^{-1/2}BA^{-1/2}
\xi,\xi\rangle^{r-1}
\leq 
\langle (A^{-1/2}BA^{-1/2})^{r-1}
\xi,\xi\rangle
=
\langle 
(A^{-1/2}M_{r}(A,B)A^{-1/2})^{-1}\xi,\xi
\rangle.
$$
So we are done. 

Next we consider the case $1< r \leq 2$. 
Let $s$ be a positive number 
such that 
$\frac{1}{r}+\frac{1}{s}=1$. 
Then since $(r-1)=1/(s-1)$, we have 
$$\langle A^{-1/2}M(A,B)A^{-1/2}
\xi,\xi\rangle
\langle A^{-1/2}BA^{-1/2}
\xi,\xi\rangle^{1/(s-1)}\geq 1$$
and hence 
$$\langle A^{-1/2}M(A,B)A^{-1/2}
\xi,\xi\rangle^{s-1}
\langle A^{-1/2}BA^{-1/2}
\xi,\xi\rangle\geq 1.$$
Since $s\geq 2$ 
and $(s-1)=1/(r-1)$, 
we compute as above 
\begin{align*}
\langle &(A^{-1/2}M(A,B)A^{-1/2})^{1/(r-1)}
\xi,\xi\rangle
\langle (A^{-1/2}M_{r}(A,B)A^{-1/2})^{-1/(r-1)}
\xi,\xi\rangle\\
&=
\langle (A^{-1/2}M(A,B)A^{-1/2})^{s-1}
\xi,\xi\rangle
\langle A^{-1/2}BA^{-1/2}
\xi,\xi\rangle\\
&\geq 
\langle (A^{-1/2}M(A,B)A^{-1/2})
\xi,\xi\rangle^{s-1}
\langle A^{-1/2}BA^{-1/2}
\xi,\xi\rangle
\geq 1.
\end{align*}
\end{proof}

\begin{thm}
For two positive invertible operators 
$X,Y\in B({\frak H})^{+}$, if 
they satisfy 
$$\langle 
X\xi,\xi
\rangle
\langle 
Y^{-1}\xi,\xi
\rangle
\geq 1$$ 
and 
$$\langle 
Y\xi,\xi
\rangle
\langle 
X^{-1}\xi,\xi
\rangle
\geq 1$$ 
for any unit vector 
$\xi\in {\frak H}$, 
then we have $X=Y$. 
\end{thm}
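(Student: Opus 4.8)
The plan is to build everything on one elementary inequality: for a positive invertible operator $A$ and a unit vector $\xi$,
$$\langle A\xi,\xi\rangle\langle A^{-1}\xi,\xi\rangle=\|A^{1/2}\xi\|^{2}\,\|A^{-1/2}\xi\|^{2}\geq|\langle A^{1/2}\xi,A^{-1/2}\xi\rangle|^{2}=1,$$
with equality precisely when $A^{1/2}\xi$ and $A^{-1/2}\xi$ are parallel, i.e.\ when $\xi$ is an eigenvector of $A$. This Cauchy--Schwarz estimate and its equality case are the only analytic inputs; the rest is a congruence trick that turns the two hypotheses, which treat $X$ and $Y$ asymmetrically, into a single symmetric statement.

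Next I would rewrite both hypotheses in scale-invariant form $\langle X\eta,\eta\rangle\langle Y^{-1}\eta,\eta\rangle\geq\langle\eta,\eta\rangle^{2}$ and $\langle Y\eta,\eta\rangle\langle X^{-1}\eta,\eta\rangle\geq\langle\eta,\eta\rangle^{2}$ (valid for all $\eta$), and then substitute $\eta=Y^{1/2}\zeta$ in the first and $\eta=Y^{-1/2}\zeta$ in the second. Writing $T=Y^{1/2}XY^{1/2}$, so that $T^{-1}=Y^{-1/2}X^{-1}Y^{-1/2}$, these become, for every unit vector $\zeta$,
$$\langle T\zeta,\zeta\rangle\geq\langle Y\zeta,\zeta\rangle^{2},\qquad\langle T^{-1}\zeta,\zeta\rangle\geq\langle Y^{-1}\zeta,\zeta\rangle^{2}.$$
Since $X=Y$ is equivalent to $T=Y^{2}$, proving $T=Y^{2}$ is now the goal, and the advantage is that $T$ and $Y$ enter in the same way.

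Multiplying the two inequalities gives, for every unit $\zeta$,
$$\langle T\zeta,\zeta\rangle\langle T^{-1}\zeta,\zeta\rangle\geq\bigl(\langle Y\zeta,\zeta\rangle\langle Y^{-1}\zeta,\zeta\rangle\bigr)^{2}\geq1.$$
Evaluating at an eigenvector $\zeta$ of $T$ makes the left-hand side equal to $1$, so the middle term collapses to $1$ as well; by the equality case above this forces $\zeta$ to be an eigenvector of $Y$, and then $X=Y^{-1/2}TY^{-1/2}$ shows $\zeta$ is a common eigenvector of $X$ and $Y$. In finite dimensions $T$ has an orthonormal eigenbasis, so $X$ and $Y$ are simultaneously diagonalized and in particular commute; writing $X=\mathrm{diag}(a_{i})$ and $Y=\mathrm{diag}(b_{i})$ in that basis, the original hypotheses read $a_{i}b_{i}^{-1}\geq1$ and $b_{i}a_{i}^{-1}\geq1$, hence $a_{i}=b_{i}$ and $X=Y$.

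The main obstacle is the general (infinite-dimensional) case, where $T$ may have purely continuous spectrum and no eigenvectors. I would replace eigenvectors by approximate eigenvectors: for each $\tau_{0}$ in the spectrum of $T$ pick unit vectors $\zeta_{n}$ with $\|(T-\tau_{0})\zeta_{n}\|\to0$, so that $\langle T\zeta_{n},\zeta_{n}\rangle\to\tau_{0}$ and $\langle T^{-1}\zeta_{n},\zeta_{n}\rangle\to\tau_{0}^{-1}$, whence the left-hand side of the product inequality tends to $1$ and therefore $\langle Y\zeta_{n},\zeta_{n}\rangle\langle Y^{-1}\zeta_{n},\zeta_{n}\rangle\to1$; quantifying the equality case shows the $\zeta_{n}$ are also approximate eigenvectors of $Y$, and then of $X$. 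The delicate step, which I expect to be the crux of a complete write-up, is to upgrade this asymptotic alignment of the spectral data of $X$, $Y$ and $T$ into genuine commutativity $XY=YX$; once that is secured one models the commuting pair as multiplication operators $X=M_{f}$, $Y=M_{g}$ on some $L^{2}$-space, where the two hypotheses localize to $f(\omega)g(\omega)^{-1}\geq1$ and $g(\omega)f(\omega)^{-1}\geq1$ almost everywhere, giving $f=g$ and hence $X=Y$.
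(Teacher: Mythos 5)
Your finite-dimensional argument is correct and clean: the symmetrization $T=Y^{1/2}XY^{1/2}$, the product inequality
$\langle T\zeta,\zeta\rangle\langle T^{-1}\zeta,\zeta\rangle\geq\bigl(\langle Y\zeta,\zeta\rangle\langle Y^{-1}\zeta,\zeta\rangle\bigr)^{2}\geq 1$,
and the Cauchy--Schwarz equality case evaluated at eigenvectors of $T$ do force a common eigenbasis for $X$ and $Y$, after which $X=Y$ follows coordinatewise. This is close in spirit to the paper's own finite-dimensional step (the paper also works with $Z=Y^{1/2}XY^{1/2}$ and the Cauchy--Schwarz equality condition, extracting $PYP=\lambda^{1/2}P$ by optimizing a scalar parameter $t$ instead of your product trick), so up to this point you have essentially rediscovered the easy half of the proof.

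The infinite-dimensional case, however, is a genuine gap, and you flag it yourself: ``upgrade this asymptotic alignment \ldots into genuine commutativity'' is not a step, it is the whole difficulty of the theorem, and your sketch supplies no mechanism for it. Approximate eigenvectors chosen sequence-by-sequence for each spectral point give only pointwise asymptotic information; to reach the norm identity $T=Y^{2}$ (equivalently $Y=Z^{1/2}$, from which commutativity falls out at the end rather than being an input) one needs \emph{uniform} estimates on spectral subspaces of $T$ together with a patching argument over a partition of the spectrum. Concretely, if $P_{i}$ are spectral projections of $T$ for intervals of width $\sim 1/n$, one must bound both the block-diagonal errors $\|P_{i}YP_{i}-Z^{1/2}P_{i}\|$ and the off-diagonal blocks $\|P_{i}^{\perp}YP_{i}\|$ with rates strong enough that summing $n$ blocks still tends to zero; the off-diagonal control is the subtle part, since naive first-order bounds on the Cauchy--Schwarz defect give $\|P_{i}^{\perp}YP_{i}\|=O(1/\sqrt{n})$ per block, which does not vanish after the $\ell^{2}$-summation over $n$ blocks. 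The paper's proof (Ando's argument) solves exactly this: it keeps the whole $t$-parametrized family of operator inequalities $2tZ/(t^{2}Z+1)\leq Y\leq(t^{2}Z+1)/(2t)$, whose defect is \emph{quadratic} in $(t-Z^{-1/2})$, evaluates it at a spectral value on each block to get second-order bounds (the paper's (3)--(5)), and controls the off-diagonal blocks through the Schur-complement identity
$\|P_{i}^{\perp}YP_{i}\|^{2}\leq\|Y\|\,\|P_{i}YP_{i}-(P_{i}Y^{-1}P_{i})^{-1}\|$,
yielding total error $O(1/n^{2})+O(1/\sqrt{n})\to 0$. Your program could plausibly be completed along these lines (for instance, a Kantorovich-type bound shows the product defect on a width-$\delta$ spectral subspace is in fact $O(\delta^{2})$), but as written the proposal defers precisely the part of the proof that carries the mathematical content, and the final appeal to a multiplication-operator model presupposes the commutativity it was supposed to establish.
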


In order to show this theorem, we need the following 
lemma. 

\begin{lem}
For two operators $X,Y\in B({\frak H})^{+}$, 
they satisfy 
$$\langle 
X\xi,\xi
\rangle
\langle 
Y^{-1}\xi,\xi
\rangle
\geq 1$$ 
for any unit vector 
$\xi\in {\frak H}$ if and only if 
we have 
$$tX+(tY)^{-1}\geq 2$$
for any positive number $t$.

\end{lem}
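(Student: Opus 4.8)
The plan is to observe that $(tY)^{-1}=t^{-1}Y^{-1}$, so the asserted inequality reads $tX+t^{-1}Y^{-1}\geq 2$, an inequality between selfadjoint operators. Since an inequality of the form $T\geq 2$ holds exactly when $\langle T\xi,\xi\rangle\geq 2$ for every unit vector $\xi$, I would test both conditions on quadratic forms and thereby reduce the whole equivalence to an elementary scalar fact. Fixing a unit vector $\xi$ and abbreviating $a=\langle X\xi,\xi\rangle$ and $b=\langle Y^{-1}\xi,\xi\rangle$ (both strictly positive, since $X,Y\in B({\frak H})^{+}$), one has $\langle(tX+t^{-1}Y^{-1})\xi,\xi\rangle=ta+t^{-1}b$.

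For the ``if'' direction I would assume $tX+t^{-1}Y^{-1}\geq 2$ for all $t>0$ and evaluate the quadratic form to get $ta+t^{-1}b\geq 2$ for every $t$. Minimizing the left-hand side over $t>0$, where the minimum $2\sqrt{ab}$ is attained at $t=\sqrt{b/a}$, yields $\sqrt{ab}\geq 1$, that is $\langle X\xi,\xi\rangle\langle Y^{-1}\xi,\xi\rangle\geq 1$; as $\xi$ is arbitrary this is the desired conclusion. For the converse I would assume $\langle X\xi,\xi\rangle\langle Y^{-1}\xi,\xi\rangle\geq 1$ for all unit $\xi$, fix $t>0$, and apply the scalar arithmetic-geometric mean inequality to obtain $ta+t^{-1}b\geq 2\sqrt{ab}\geq 2$ for each unit $\xi$; this says precisely $tX+t^{-1}Y^{-1}\geq 2$, and letting $t$ vary finishes the argument.

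I do not expect a genuine obstacle here: the entire proof is the pointwise passage from the operator inequality to quadratic forms, combined with the scalar estimate $ta+t^{-1}b\geq 2\sqrt{ab}$. The only points requiring a little care are that the positivity and invertibility of $X$ and $Y$ force $a,b>0$, so that both the minimization over $t$ and the application of the arithmetic-geometric mean inequality are legitimate, and that the operator inequality $tX+t^{-1}Y^{-1}\geq 2$ can indeed be verified vector by vector, so that no commutativity or simultaneous-diagonalization hypothesis on $X$ and $Y$ is needed.
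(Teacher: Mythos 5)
Your proof is correct and is essentially the paper's own argument: the paper likewise fixes a unit vector $\xi$, considers $f(t)=t\langle X\xi,\xi\rangle+t^{-1}\langle Y^{-1}\xi,\xi\rangle$, and uses that its minimum over $t>0$ equals $2\sqrt{\langle X\xi,\xi\rangle\langle Y^{-1}\xi,\xi\rangle}$ to get both directions at once. Your write-up just makes explicit the two implications that the paper compresses into ``Hence we are done.''
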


\begin{proof}
Let $\xi\in {\frak H}$ be a unit vector. 
We set $f(t)=t\langle 
X\xi,\xi
\rangle
+t^{-1}
\langle 
Y^{-1}\xi,\xi
\rangle$ for $t>0$. 
Then it is easy to see that the minimum value of 
$f(t)$ is equal to 
$2\sqrt{
\langle 
X\xi,\xi
\rangle
\langle 
Y^{-1}\xi,\xi
\rangle}
$. Hence we are done.
\end{proof}

\begin{proof}[Proof of Theorem 2.3]
By the previous lemma we have 
$$tX+(tY)^{-1}\geq 2$$
and 
$$tY+(tX)^{-1}\geq 2$$
for any positive number $t$. 
Let $Z=Y^{1/2}XY^{1/2}$. 
Then we have 
$tZ+t^{-1}\geq 2Y$ and 
$t+(tZ)^{-1}\geq 2Y^{-1}$. 
So we get 
$$\dfrac{2tZ}{t^{2}Z+1}\leq Y
\leq 
\frac{t^{2}Z+1}{2t}.\eqno{(1)}
$$

First we assume that the Hilbert space is 
finite dimensional because 
in this case the proof becomes simpler. 
Take any projection $P$ of rank one which 
reduces $Z$, that is, 
$ZP=\lambda P$ for some positive number 
$\lambda$. 
(Here we use the fact that $Z$ is atomic, 
thanks to finite dimensionality.) 
Then we get 
$$\dfrac{2t\lambda}{t^{2}\lambda+1}P\leq PYP
\leq 
\frac{t^{2}\lambda+1}{2t}P.
$$
Since $P$ is of rank one, 
$PYP$ is of the form 
$PYP=\alpha P$ for some 
$\alpha>0$. 
Therefore taking the maximum in $t$ 
on the left-hand side and the minimum 
on the right-hand side we have 
$PYP=\lambda^{1/2}P$. 
On the other hand, since we also have 
$$
\dfrac{t^{2}Z+1}{2tZ}\geq Y^{-1}
\geq 
\frac{2t}{t^{2}Z+1},\eqno{(2)}
$$
we get 
$$
\dfrac{t^{2}\lambda+1}{2t\lambda}P\geq PY^{-1}P
\geq 
\frac{2t}{t^{2}\lambda+1}P
$$ 
and hence 
$PY^{-1}P=\lambda^{-1/2}P$ as above. 

Let $\xi\in {\frak H}$ be a vector satisfying 
$P\xi=\xi$. Then we have 
\begin{align*}
||Y^{1/2}\xi||\cdot||Y^{-1/2}\xi||&=
\langle PYP\xi,\xi\rangle^{1/2}
\langle PY^{-1}P\xi,\xi\rangle^{1/2}
=
\langle \lambda^{1/2}\xi,\xi\rangle^{1/2}
\langle \lambda^{-1/2}\xi,\xi\rangle^{1/2}\\
&=||\xi||^{2}
=\langle Y^{1/2}\xi,Y^{-1/2}\xi\rangle.
\end{align*}
By the equality condition for the
Cauchy-Schwarz inequality, 
this implies that $Y^{1/2}\xi$ is a 
scalar-multiple of $Y^{-1/2}\xi$, 
in other words, 
$Y\xi$ is a 
scalar-multiple of $\xi$. 
So we get $YP=PYP=Z^{1/2}P$. Since by the 
spectral theory for a positive matrix there are 
such projections $P_{i}$ of rank one such that 
$\sum_{i}P_{i}=1$, 
we conclude that $Y=Z^{1/2}=(Y^{1/2}XY^{1/2})^{1/2}$ 
and hence $X=Y$.

Next we shall consider the general case. 
The following argument is due to 
a private communication with 
T.~Ando~\cite{A2}. 
The author would like to thank Professor Ando for permitting 
the author to include his argument in this paper. 

It is easy to see that for any $t>0$ 
$$\dfrac{2tZ}{t^{2}Z+1}\leq Z^{1/2}
\leq 
\frac{t^{2}Z+1}{2t}
$$
and
$$
\dfrac{t^{2}Z+1}{2tZ}\geq Z^{-1/2}
\geq 
\frac{2t}{t^{2}Z+1}.
$$
Combining these with $(1)$ and $(2)$, we have 
$$\dfrac{2tZ}{t^{2}Z+1}
-\frac{t^{2}Z+1}{2t}
\leq Y-Z^{1/2}
\leq 
\frac{t^{2}Z+1}{2t}-\dfrac{2tZ}{t^{2}Z+1}
$$
and 
$$\dfrac{t^{2}Z+1}{2tZ}-\frac{2t}{t^{2}Z+1}
\geq Y^{-1}-Z^{-1/2}
\geq \frac{2t}{t^{2}Z+1}-
\dfrac{t^{2}Z+1}{2tZ}.
$$
Here we compute 
$$
\frac{t^{2}Z+1}{2t}-\dfrac{2tZ}{t^{2}Z+1}
=
(t-Z^{-1/2})^{2}
\dfrac{Z^{2}(t+Z^{-1/2})^{2}}{2t(t^{2}Z+1)}
$$
and 
$$
\frac{t^{2}Z+1}{2tZ}-\dfrac{2t}{t^{2}Z+1}
=
(t-Z^{-1/2})^{2}
\dfrac{Z(t+Z^{-1/2})^{2}}{2t(t^{2}Z+1)}.
$$
Therefore there is a positive number $\gamma$
such that 
for any spectrum $\lambda$ of $Z$ 
and a projection $P$ which reduces $Z$ 
we have 
$$
||PYP-Z^{1/2}P|| \leq \gamma||(\lambda-Z^{-1/2})P||^{2}
\eqno{(3)}
$$
and 

$$
||PY^{-1}P-Z^{-1/2}P||\leq \gamma||(\lambda-Z^{-1/2})P||^{2}.
\eqno{(4)}
$$

Let us use $(PY^{-1}P)^{-1}$ to denote the 
inverse of $PY^{-1}P$ on $P{\frak H}$. 
Then we see that 
$$
PY^{-1}P-Z^{-1/2}P=(PY^{-1}P)(Z^{1/2}P-(PY^{-1}P)^{-1})
Z^{-1/2}P
$$
and hence by using (4) 
there is a positive number $\gamma'$ such that 
$$
||(PY^{-1}P)^{-1}-Z^{1/2}P||\leq 
\gamma'||(\lambda-Z^{-1/2})P||^{2}.
$$
Combining this with (3) we conclude that 
there is a positive number $\gamma''$ such that 
for any spectrum $\lambda$ of $Z$ 
and spectral projection $P$ of $Z$ 
$$
||PYP-(PY^{-1}P)^{-1}||
\leq 
\gamma''||(\lambda-Z^{-1/2})P||^{2}.\eqno{(5)}
$$

For any integer $n$, 
take a partition 
of unity $\{P_{i}\}_{i=1}^{n}$ which consists of spectral 
projections of $Z$ such that 
there exist spectrums $\{\lambda_{i}\}_{i=1}^{n}$ of $Z^{-1/2}$
satisfying 
$$
||(\lambda_{i}-Z^{-1/2})P_{i}||
\leq 
\dfrac{||Z^{-1/2}||}{n}.
$$
Then it follows from (3) that 
$$
||\sum_{i=1}^{n}(P_{i}YP_{i}-Z^{1/2}P_{i})||\leq 
\dfrac{\gamma||Z^{-1/2}||^{2}}{n^{2}}.\eqno{(6)}
$$
Similarly it follows from (5) that 
$$
||P_{i}YP_{i}-(P_{i}Y^{-1}P_{i})^{-1}||
\leq 
\dfrac{\gamma''||Z^{-1/2}||^{2}}{n^{2}}.
$$
Recall the 
following formula, which is so-called 
Schur complement 
$$
(P_{i}Y^{-1}P_{i})^{-1}
=P_{i}YP_{i}-P_{i}YP_{i}^{\perp}
(P_{i}^{\perp}YP_{i}^{\perp})^{-1}
P_{i}^{\perp}YP_{i}
$$
where $P_{i}^{\perp}=1-P_{i}$. 
Indeed we can show  
\begin{align*}
\{P_{i}YP_{i}-&P_{i}YP_{i}^{\perp}
(P_{i}^{\perp}YP_{i}^{\perp})^{-1}
P_{i}^{\perp}YP_{i}\}\cdot P_{i}Y^{-1}P_{i}\\
&=P_{i}YP_{i}Y^{-1}P_{i}-
P_{i}YP_{i}^{\perp}
(P_{i}^{\perp}YP_{i}^{\perp})^{-1}
P_{i}^{\perp}YP_{i}Y^{-1}P_{i}\\
&=P_{i}YP_{i}Y^{-1}P_{i}-
P_{i}YP_{i}^{\perp}
(P_{i}^{\perp}YP_{i}^{\perp})^{-1}
(P_{i}^{\perp}Y-P_{i}^{\perp}YP_{i}^{\perp})
Y^{-1}P_{i}\\
&=P_{i}YP_{i}Y^{-1}P_{i}-P_{i}YP_{i}^{\perp}Y^{-1}P_{i}=P_{i}.
\end{align*}
By using this formula we have 
\begin{align*}
||P_{i}^{\perp}YP_{i}||^{2}
&=||(P_{i}^{\perp}YP_{i}^{\perp})^{1/2}
(P_{i}^{\perp}YP_{i}^{\perp})^{-1/2}
P_{i}^{\perp}YP_{i}||^{2}\\
&\leq 
||Y||\cdot||(P_{i}^{\perp}YP_{i}^{\perp})^{-1/2}
P_{i}^{\perp}YP_{i}||^{2}\\
&=||Y||\cdot
||P_{i}YP_{i}^{\perp}
(P_{i}^{\perp}YP_{i}^{\perp})^{-1}
P_{i}^{\perp}YP_{i}||\\
&=
||Y||\cdot||P_{i}YP_{i}-(P_{i}Y^{-1}P_{i})^{-1}||\\
&\leq \dfrac{\gamma''||Y||\cdot||Z^{-1/2}||^{2}}{n^{2}}. 
\end{align*}
Therefore for each unit vector 
$\xi\in {\frak H}$ by using the Cauchy-Schwarz inequality 
we see that 
\begin{align*}
||\sum_{i=1}^{n}
P_{i}^{\perp}YP_{i}\xi||
&\leq 
\sum_{i=1}^{n}
||P_{i}^{\perp}YP_{i}||\cdot||P_{i}\xi||\\
&\leq 
\sqrt{\sum_{i=1}^{n}
||P_{i}^{\perp}YP_{i}||^{2}}
\sqrt{\sum_{i=1}^{n}||P_{i}\xi||^{2}}\\
&=\sqrt{\sum_{i=1}^{n}
||P_{i}^{\perp}YP_{i}||^{2}}\\
&
\leq 
\sqrt{\sum_{i=1}^{n}\dfrac{\gamma''||Y||\cdot||Z^{-1/2}||^{2}}{n^{2}}}
=\sqrt{\dfrac{\gamma''||Y||\cdot||Z^{-1/2}||^{2}}{n}}.
\end{align*}
Thus we get 
$$
||\sum_{i=1}^{n}
P_{i}^{\perp}YP_{i}||
\leq 
\sqrt{\dfrac{\gamma''||Y||\cdot||Z^{-1/2}||^{2}}{n}}
.\eqno{(7)}
$$
By using (6) and (7) we see that 
\begin{align*}
||Y-Z^{1/2}||
&\leq 
||\sum_{i=1}^{n}P_{i}YP_{i}-Z^{1/2}P_{i}||
+||\sum_{i=1}^{n}
P_{i}^{\perp}YP_{i}||\\
&\leq 
\dfrac{\gamma||Z^{-1/2}||}{n^{2}}
+
\sqrt{\dfrac{\gamma''||Y||\cdot||Z^{-1/2}||^{2}}{n}}.
\end{align*}
By tending $n\rightarrow\infty$ we get $Y=Z^{1/2}$ 
and hence $X=Y$.

\end{proof}

Now we can prove our main result. 

\begin{proof}[Proof of Theorem 2.1]
First we consider the case $r\geq 2$. 
Set 
$$X=A^{-1/2}M(A,B)A^{-1/2}\ 
{\text{and}}\ 
Y=A^{-1/2}M_{r}(A,B)A^{-1/2}.$$ 
By Lemma 2.2 for any unit vector 
$\xi\in {\frak H}$ we have 
$$\langle 
X\xi,\xi
\rangle
\langle 
Y^{-1}\xi,\xi
\rangle\geq 1.
$$
On the other hand, 
thanks to the relations 
$M(A,B)^{-1}=M(A^{-1},B^{-1})$ 
and 
$M_{r}(A,B)^{-1}=M_{r}(A^{-1},B^{-1})$, 
applying Lemma 2,2 for the pair 
$(A^{-1},B^{-1})$ 
we have 
\begin{align*}
\langle 
X^{-1}&\xi,\xi
\rangle
\langle 
Y\xi,\xi
\rangle\\
&=
\langle 
A^{1/2}M(A^{-1},B^{-1})A^{1/2}\xi,\xi
\rangle
\langle 
(A^{1/2}M_{r}(A^{-1},B^{-1})A^{1/2})^{-1}\xi,\xi
\rangle
\geq 1.
\end{align*}
Therefore by Theorem 2.3 we get $X=Y$ and hence 
$M=M_{r}$. 

In the case $1< r \leq 2$, 
set 
$$X=(A^{-1/2}M(A,B)A^{-1/2})^{1/(r-1)}
\ {\text{and}}\ 
Y=(A^{-1/2}M_{r}(A,B)A^{-1/2})^{1/(r-1)}.$$ 
Then in the same way 
we conclude the desired fact.
\end{proof}

\begin{rem}
\begin{enumerate}
\item 
For positive invertible operators 
$A,B,C$, the block matrix
$$
\begin{pmatrix}
A&B\\
B&C
\end{pmatrix}
$$
is positive if and only if 
$A\geq BC^{-1}B$~\cite{A}. Therefore 
for two positive invertible operators 
$X,Y$, the block matrix
$$
\begin{pmatrix}
X&1\\
1&Y^{-1}
\end{pmatrix}
$$
is positive if and only if 
$X\geq Y$. On the other hand 
for any unit vector $\xi\in {\frak H}$ 
the matrix
$$
\begin{pmatrix}
\langle 
X\xi,\xi
\rangle&1\\
1&\langle Y^{-1}\xi,\xi
\rangle
\end{pmatrix}
$$
is positive if and only if 
$\langle 
X\xi,\xi
\rangle
\langle 
Y^{-1}\xi,\xi
\rangle
\geq 1$. 
Thus the condition 
$\langle 
X\xi,\xi
\rangle
\langle 
Y^{-1}\xi,\xi
\rangle
\geq 1$ 
is weaker than $X\geq Y$. 
We do not know whether the condition 
$\langle 
X\xi,\xi
\rangle
\langle 
Y^{-1}\xi,\xi
\rangle
\geq 1$ 
define new order $X``\geq'' Y$ or not. 
The author guess that this relation 
does not satisfy transitivity. 
Here we remark that if $X``\geq'' Y$, 
then we have $X^{2}``\geq'' Y^{2}$. 
Indeed if we have $\langle 
X\xi,\xi
\rangle
\langle 
Y^{-1}\xi,\xi
\rangle
\geq 1$, 
then we get 
$$
\langle 
X^{2}\xi,\xi
\rangle
\langle 
Y^{-2}\xi,\xi
\rangle
\geq 
\langle 
X\xi,\xi
\rangle^{2}
\langle 
Y^{-1}\xi,\xi
\rangle^{2}
\geq 
1
.$$ 
Thus this relation is not equivalent to 
usual order. Theorem 2.3 states that if we have 
$X``\geq'' Y$ and $Y``\geq'' X$, then 
we conclude $X=Y$ (reflexivity). 

\item 
We would like to conjecture that Theorem 2.1 holds 
by replacing the condition (iii) with 
$$({\rm{iii}})'\ \ \ \ M(A,B)=A^{r}B^{1-r}
{\text{\ \ \ \ if $A$ commutes with $B$.}}
$$ 

\end{enumerate}
\end{rem}

Finally we shall prove the analogue 
in the case $0<r<1$ 
for Theorem 2.1. 

\begin{thm}
Assume $0<r<1$. 
For any $A,B\in B({\frak H})^{+}$, if the map $M$ satisfies 
\begin{enumerate}
\item 
$M(A,B)\leq rA+(1-r)B,$
\item
$M(tA,B)=t^{r}M(A,B)$ for any positive number $t$, 
\item
$M(A,B)^{-1}=M(A^{-1},B^{-1}),$
\end{enumerate}
then we have $M=M_{r}$.
\end{thm}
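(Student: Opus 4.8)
The plan is to run the proof of Theorem 2.1 with the roles of maxima and minima interchanged, compensating for the reversed hypothesis (i) by passing to the exponent $1/(1-r)>1$ before invoking Jensen's inequality; the concluding step is again Theorem 2.3.

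First I would prove the $0<r<1$ analogue of the first half of Lemma 2.2. Combining (ii) with the reversed hypothesis (i) gives, for every $t>0$, the operator inequality
\[
A^{-1/2}M(A,B)A^{-1/2}\leq rt^{1-r}+(1-r)t^{-r}A^{-1/2}BA^{-1/2}.
\]
Write $P=A^{-1/2}M(A,B)A^{-1/2}$ and $Q=A^{-1/2}BA^{-1/2}$. Testing against a unit vector $\xi$, the scalar function $g(t)=rt^{1-r}+(1-r)t^{-r}\langle Q\xi,\xi\rangle$ now attains its \emph{minimum} at $t=\langle Q\xi,\xi\rangle$ (as opposed to the maximum appearing in Lemma 2.2), with value $\langle Q\xi,\xi\rangle^{1-r}$; since $\langle P\xi,\xi\rangle\leq g(t)$ for all $t$, this yields $\langle P\xi,\xi\rangle\leq\langle Q\xi,\xi\rangle^{1-r}$ for every unit $\xi$. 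Applying the identical reasoning to the pair $(A^{-1},B^{-1})$ and using (iii) together with $A^{1/2}M(A,B)^{-1}A^{1/2}=P^{-1}$ and $A^{1/2}B^{-1}A^{1/2}=Q^{-1}$, I would obtain the companion estimate $\langle P^{-1}\xi,\xi\rangle\leq\langle Q^{-1}\xi,\xi\rangle^{1-r}$.

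The decisive step is to set $\alpha=1/(1-r)$, which satisfies $\alpha>1$ throughout $0<r<1$, so that, unlike the case $r>1$, no division into subcases is required. Raising the two estimates to the power $\alpha$ gives $\langle P\xi,\xi\rangle^{\alpha}\leq\langle Q\xi,\xi\rangle$ and $\langle P^{-1}\xi,\xi\rangle^{\alpha}\leq\langle Q^{-1}\xi,\xi\rangle$. Using the elementary Jensen estimates $\langle P^{\pm\alpha}\xi,\xi\rangle\geq\langle P\xi,\xi\rangle^{\pm\alpha}$ and $\langle P\xi,\xi\rangle\langle P^{-1}\xi,\xi\rangle\geq1$, I would then deduce
\[
\langle Q\xi,\xi\rangle\langle P^{-\alpha}\xi,\xi\rangle\geq\langle P\xi,\xi\rangle^{\alpha}\langle P\xi,\xi\rangle^{-\alpha}=1
\]
and, from the companion estimate, $\langle P^{\alpha}\xi,\xi\rangle\langle Q^{-1}\xi,\xi\rangle\geq\langle P\xi,\xi\rangle^{\alpha}\langle P^{-1}\xi,\xi\rangle^{\alpha}=(\langle P\xi,\xi\rangle\langle P^{-1}\xi,\xi\rangle)^{\alpha}\geq1$. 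These are precisely the two hypotheses of Theorem 2.3 for $X=P^{\alpha}=(A^{-1/2}M(A,B)A^{-1/2})^{1/(1-r)}$ and $Y=Q=A^{-1/2}BA^{-1/2}$. Hence Theorem 2.3 gives $X=Y$, i.e. $A^{-1/2}M(A,B)A^{-1/2}=(A^{-1/2}BA^{-1/2})^{1-r}$, which by $(\dagger)$ equals $A^{-1/2}M_{r}(A,B)A^{-1/2}$; therefore $M(A,B)=M_{r}(A,B)$ for all $A,B$, that is $M=M_{r}$.

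The step I expect to demand the most care is the reversal of direction in (i). Since $M(A,B)\leq rA+(1-r)B$ furnishes only an upper bound on $\langle P\xi,\xi\rangle$, and $M_{r}$ satisfies the very same bound, upper estimates by themselves cannot separate $M$ from $M_{r}$; this is exactly why the inverse symmetry (iii) is indispensable, supplying the second inequality for $P^{-1}$ and $Q^{-1}$. Equally essential is raising to the power $\alpha>1$ before applying Jensen: the exponent $1-r$ lies in $(0,1)$, so $x^{1-r}$ is concave and Jensen would point the wrong way, whereas once $\alpha>1$ both $x^{\alpha}$ and $x^{-\alpha}$ are convex, and the two upper bounds are converted into the two lower-bound products $\geq1$ that Theorem 2.3 requires.
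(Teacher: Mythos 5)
Your proposal is correct: the identification of the \emph{minimum} of $g(t)$ at $t=\langle Q\xi,\xi\rangle$ (the reversal of the maximum in Lemma 2.2, forced by the reversed hypothesis (i)), the companion estimate for $(P^{-1},Q^{-1})$ via (iii), the Jensen inequalities for the convex functions $x^{\pm 1/(1-r)}$, the Cauchy--Schwarz bound $\langle P\xi,\xi\rangle\langle P^{-1}\xi,\xi\rangle\geq 1$, and the final identification via $(\dagger)$ all check out, and the two displayed products are exactly the hypotheses of Theorem 2.3 for $X=P^{1/(1-r)}$, $Y=Q$.

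Your route, however, is not the one the paper writes down, and the difference is worth recording. The paper conjugates by $B^{-1/2}$ rather than $A^{-1/2}$: setting $Y=B^{-1/2}M(A,B)B^{-1/2}$ and $Z=B^{-1/2}AB^{-1/2}$, hypotheses (i)--(iii) give the sandwich
$$
\frac{t^{r}Z}{rt+(1-r)Z}\leq Y\leq\frac{rZ+(1-r)t}{t^{1-r}}\qquad(t>0),
$$
and the paper then asserts that $Y=Z^{r}$ follows ``by almost the same arguments as those in the proof of Theorem 2.3''; that is, it proposes to re-run the spectral-projection machinery (the analogues of estimates (1)--(7)) with these new sandwich functions, using the \emph{proof} of Theorem 2.3 rather than its statement. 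Your argument instead transplants the structure of the proof of Theorem 2.1 in the case $1<r\leq 2$: establish a $0<r<1$ analogue of Lemma 2.2, pass to the exponent $1/(1-r)>1$ so that Jensen points the right way, and then invoke Theorem 2.3 verbatim as a black box. Note that by $(\dagger)$ your $Y=Q$ equals $(A^{-1/2}M_{r}(A,B)A^{-1/2})^{1/(1-r)}$, so your choice of the pair $(X,Y)$ is precisely the analogue of the paper's choice for $1<r\leq 2$ with $r-1$ replaced by $1-r$. What your approach buys is that it turns the paper's sketch into a complete proof assembled only from results already fully proven: one does not need to verify that the quadratic error estimates in the proof of Theorem 2.3 survive the replacement of $\frac{2tZ}{t^{2}Z+1}$ and $\frac{t^{2}Z+1}{2t}$ by the new sandwich functions above, a verification the paper leaves to the reader. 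The paper's version, when carried out, yields $Y=Z^{r}$ in one pass without introducing the auxiliary power $1/(1-r)$, but as written it remains a sketch.
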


\begin{proof}
The proof is essentially same as that of 
Theorem 2.1. So we would like to give the sketch 
of the proof. 

By assumptions 
for any positive number $t$ 
we have 
$$
M(A,B)\leq rt^{r-1}A+(1-r)t^{r}B
$$
and 
$$
M(A,B)^{-1}\leq rt^{1-r}A^{-1}+(1-r)t^{-r}B^{-1}.
$$
Set 
$$Y=B^{-1/2}M(A,B)B^{-1/2}\ 
{\text{and}}\ 
Z=B^{-1/2}AB^{-1/2}.$$ 
Then we have 
$$
\dfrac{t^{r}Z}{rt+(1-r)Z}
\leq 
Y 
\leq 
\dfrac{rZ+(1-r)t}{t^{1-r}}.
$$
Then by the almost same arguments as those in the proof of Theorem 
2.3, we can show $Y=Z^{r}$. 
\end{proof}


\begin{thebibliography}{99}

\bibitem{A} 
T.~Ando, 
{\it Concavity of certain maps on 
positive definite matrices and applications 
to Hadamard products}, 
Linear Algebra and Appl. 
{\bf 26} (1979) 203--241.

\bibitem{A2} \underline{\phantom{aaaaa}}, private 
communication,

\bibitem{AN} T.~Ando and K.~Nishio, 
{\it Characterizations of operations derived 
from network connections,} 
J. Math. Anal. Appl. {\bf 53} (1976) 
539--549.

\bibitem{C} 
J.~B.~Conway, {\it A course in operator theory.} 
Graduate Studies in Mathematics, 21. 
American Mathematical Society, Providence, RI, 2000.


\bibitem{CL}
E.~A.~Carlen and E.~H.~Lieb, 
{\it A Minkowski type trace inequality 
and strong subadditivity of quantum entropy II: 
convexity and concavity}, Lett. Math. Phys., 
{\bf 83} No. 2, (2008) 107--126.


\end{thebibliography}
\end{document}